\numberwithin{equation}{section}
\DeclareMathOperator{\E}{\mathbb{E}}
\DeclareMathOperator{\Var}{Var}
\DeclareMathOperator{\sign}{sign}
\DeclareMathOperator{\rank}{rank}
\newcommand{\ip}[2]{\langle#1,#2\rangle}
\def \N {\mathbb{N}}
\def \R {\mathbb{R}}
\def \FF {\mathcal{F}}
\def \s {\sigma}
\def \tran {\mathsf{T}}
\def \psitwo {{\psi_2}}
\def\eps{{\varepsilon}}
\newtheorem{theorem}{Theorem}[section]
\newtheorem{proposition}[theorem]{Proposition}
\newtheorem{corollary}[theorem]{Corollary}
\newtheorem{lemma}[theorem]{Lemma}
\theoremstyle{remark}
\newtheorem{remark}[theorem]{Remark}
\begin{document}

\title{Covariance loss,  Szemeredi regularity, \\ and differential privacy}

\author{March Boedihardjo}
\address{Department of Mathematics, ETH Z\"urich}
\email{march.boedihardjo@ifor.math.ethz.ch}
\author{Thomas Strohmer}
\address{Department of Mathematics, University of California, Davis and Center of Data Science and Artificial Intelligence Research, UC Davis}
\email{strohmer@math.ucdavis.edu}
\author{Roman Vershynin}
\address{Department of Mathematics, University of California, Irvine}
\email{rvershyn@uci.edu}

\maketitle

\begin{abstract}
We show how randomized rounding based on Grothendieck's identity can be used to prove a nearly tight bound on the covariance loss--the amount of covariance that is lost by taking conditional expectation.
This result yields a new type of weak Szemeredi regularity lemma
for positive semidefinite matrices and kernels.
Moreover, it can be used to construct differentially private synthetic data.
\end{abstract}

\section{Introduction}
Consider  a random vector  $X$ taking values in $\R^d$ and a $\s$-algebra  $\FF$.
Let $Y$ denote the conditional expectation: $Y = \E[X|\FF]$. 
In dimension $d=1$, where $X$ and $Y$ are random variables, the law of total variance states that
\begin{equation}	\label{eq: ltv}
\Var(X)-\Var(Y) = \E X^2 - \E Y^2 = \E (X-Y)^2 \ge 0.
\end{equation}
Thus, taking conditional expectation underestimates the variance. This observation extends to higher dimensions. Namely, let $\Sigma_X = \E (X-\E X)(X-\E X)^\tran$ denote the covariance matrix of $X$, 
and similarly for $\Sigma_Y$. Then
\begin{equation}\label{covloss}
\Sigma_X - \Sigma_Y = \E XX^\tran - \E YY^\tran = \E (X-Y)(X-Y)^\tran \succeq 0,
\end{equation}
where $\succeq$ denotes the Loewner order, in which $A \succeq B$ if $A-B$ is positive semidefinite.
Just like in the one-dimensional case, we see that taking conditional expectation underestimates the covariance. 

In~\cite{BSV2021a}, we asked the basic question: ``How big is the {\em covariance loss} $\Sigma_X - \Sigma_Y$?''
The answer will obviously depend on the choice of the sigma-algebra $\FF$, prompting the next question:  ``What sigma-algebra $\FF$ of given complexity minimizes the covariance loss?''

It was shown in~\cite{BSV2021a}  that there exists a partition of the sample space into at most $k$ sets
  such that for the sigma-algebra $\FF$ generated by this partition, the covariance loss~\eqref{covloss} is upper bounded by  $C \sqrt{\log \log k/\log k}$, where $C$ is an absolute constant. In~\cite{jain2022optimal}, Jain, Sah, and Sawhney were able to improve this bound to $C/\sqrt{\log k}$. It follows from Proposition 3.14 in~\cite{BSV2021a} that  this bound is optimal up to the value of the constant $C$. The proofs of the bounds in both~\cite{BSV2021a} and~\cite{jain2022optimal} are somewhat technical. In this paper we give a new approach to this problem, which is conceptually related to Szemeredi regularity. This approach yields a tighter bound in terms of the constant on the covariance loss (Corollary~\ref{cor: covariance loss}), as well as a much shorter and more elementary proof. 

The celebrated Szemeredi regularity lemma states that that for every large enough graph, the set of nodes can be divided into subsets of about the same size so that the edges between different subsets behave almost randomly~\cite{szemeredi1975regular}. Expressed in the language of linear algebra, it says that the adjacency matrix of a graph can be approximated by a sum of cut matrices\footnote{A cut matrix is a rank-one matrix which is constant on a block and zero elsewhere~\cite{frieze1999quick}.}.
The Szemeredi regularity lemma, both in the combinatorial and linear-algebraic forms, has many deep algorithmic and combinatorial applications, see e.g.~\cite{frieze1999quick, lovasz2007szemeredi,komlos2000regularity,bodwin2022unified}. 
We prove a new version of (weak) Szemeredi regularity lemma, 
which states that any positive semidefinite kernel $K(x,y)$ whose diagonal is uniformly bounded can be decomposed into a sum of $k$ step-functions plus an error term whose $L^2$-norm is $O(1/\sqrt{\log k})$.

\medskip
The outline of the paper is as follows.  
We will first bound the covariance loss by the ``covariance increment'' (Proposition~\ref{prop: covariance increment}), find a nice representation of the covariance increment (Proposition~\ref{prop: increment ip}), and combine it with randomized rounding based on Grothendieck's identity to bound the covariance increment (Theorem~\ref{thm: regularity}). Thus we can not only bound the covariance loss (Corollary~\ref{cor: covariance loss}) but also derive a new type of weak Szemeredi regularity for matrices (Theorem~\ref{thm: wsr}) and kernels (Theorem~\ref{thm: wsr kernels}). 
We conclude by illustrating how our results can be used in connection with differential privacy and synthetic data to improve the accuracy bounds obtained in~\cite{BSV2021a}.

\subsection{Notation}
The subgaussian norm of a random variable $X$ is denoted by
$\|X\|_{\psitwo}$ (see e.g.~\cite{vershyninbook}), $\E(X)$ is the expectation of $X$ and $\|X\|_{L^{p}}=(\mathbb{E}|X|^{p})^{1/p}$. For $d\times d$ matrices $A,B$, define the inner product $\ip{A}{B}=\mathrm{Tr}(AB^{T})$ and the Frobenius norm $\| A \|_F=\sqrt{\ip{A}{A}}$.

\section{The covariance increment}

We start out by  bounding the covariance loss $\E XX^\tran - \E YY^\tran$ by the covariance increment.

In the sequel, $X$ is a random vector taking values in $\R^d$, $\FF$ is a $\s$-algebra and $Y = \E[X|\FF]$. Moreover, $(X',\mathcal{F}',Y')$ is an independent copy of $(X,\mathcal{F},Y)$ so that the sample space is a product space for which $X,\FF,Y$ are based on the first component of the product space and $X',\mathcal{F}',Y'$ are based on the second.

With a slight abuse of notation, the product $\s$-algebra $\FF\times\FF$ denotes the $\s$-algebra generated by $\mathcal{F}$ and $\mathcal{F}'$. Moreover, the inner product $\ip{Y}{Y'}=\mathbb{E}(\ip{X}{X'}|\,\FF\times\FF)$.

\begin{proposition}[Covariance loss and covariance increment]		\label{prop: covariance increment}
\begin{equation}	\label{eq: covariance increment}
\norm{\E XX^\tran - \E YY^\tran}_F^2
\le \norm{\E XX^\tran}_F^2 - \norm{\E YY^\tran}_F^2.
\end{equation}
\end{proposition}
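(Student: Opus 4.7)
The plan is to reduce the claimed inequality to the well-known fact that the Frobenius inner product of two positive semidefinite matrices is nonnegative. The starting point is the decomposition already observed in \eqref{covloss}: writing $Z := X - Y$, we have $\E[Z \mid \FF] = 0$, and so the cross term $\E Y Z^\tran = \E\bigl(\E[Y Z^\tran \mid \FF]\bigr) = \E\bigl(Y\,\E[Z^\tran \mid \FF]\bigr) = 0$, which yields the orthogonal decomposition
\begin{equation*}
\E X X^\tran \;=\; \E Y Y^\tran \;+\; \E Z Z^\tran.
\end{equation*}

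Setting $A := \E X X^\tran$, $B := \E Y Y^\tran$, and $C := \E Z Z^\tran = A - B$, all three matrices are symmetric and positive semidefinite, and the inequality to prove reads $\|C\|_F^2 \le \|A\|_F^2 - \|B\|_F^2$. Expanding the right-hand side using $A = B + C$ gives
\begin{equation*}
\|A\|_F^2 - \|B\|_F^2 \;=\; \|B + C\|_F^2 - \|B\|_F^2 \;=\; \|C\|_F^2 + 2\,\ip{B}{C},
\end{equation*}
so the desired bound is equivalent to $\ip{B}{C} \ge 0$.

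This last inequality is the only nontrivial ingredient, and it follows from the standard fact that $\tr(BC) \ge 0$ whenever $B$ and $C$ are positive semidefinite: writing $B = P^2$ with $P$ symmetric PSD, one has $\tr(BC) = \tr(PCP) = \|P C^{1/2}\|_F^2 \ge 0$. I do not anticipate any real obstacle here; the only care needed is to notice that the law-of-total-expectation identity used to remove the cross term makes the PSD decomposition $A = B + C$ available, at which point the Frobenius computation is immediate.
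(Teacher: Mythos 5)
Your proof is correct, but it takes a genuinely different route from the paper's. Both arguments, after expanding the square of the Frobenius norm, reduce to the single inequality $\ip{\E YY^\tran}{\E XX^\tran - \E YY^\tran} \ge 0$. The paper establishes this by first proving a probabilistic identity (Lemma~\ref{lem: frobenius by ip}: $\ip{\E UU^\tran}{\E VV^\tran} = \E\ip{U}{V'}^2$ for an independent copy $V'$), which turns the target into $\E\ip{Y}{Y'}^2 \le \E\ip{X}{Y'}^2$, and then finishes with conditional Jensen. You instead observe from the orthogonal decomposition of \eqref{covloss} (using $\E[X-Y \mid \FF] = 0$ to kill the cross term) that $\E XX^\tran - \E YY^\tran = \E ZZ^\tran$ is itself PSD, and then invoke the elementary linear-algebra fact that $\tr(BC) \ge 0$ for PSD $B,C$. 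Your route is more self-contained and purely deterministic once the decomposition is available; the paper's route has the side benefit that Lemma~\ref{lem: frobenius by ip} is reused immediately afterward to prove the increment representation in Proposition~\ref{prop: increment ip}, which is essential for the rest of the argument, so the paper gets two results from one piece of machinery.
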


The proof of Proposition~\ref{prop: covariance increment} will be based on the following lemma:

\begin{lemma}				\label{lem: frobenius by ip}
  Let $(U,V)$ be a pair of random vectors taking values in $\mathbb{R}^{d}$, and let $(U',V')$ be an independent copy.
  Then 
  $$
  \ip{\E UU^\tran}{\E VV^\tran} = \E \ip{U}{V'}^2.
  $$ 
  In particular, setting $V=U$, we have
  $$
  \norm{\E UU^\tran}_F^2 = \E \ip{U}{U'}^2.
  $$
\end{lemma}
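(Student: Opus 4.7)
The plan is to prove the identity by a direct unfolding of the Frobenius inner product into a trace, then exploiting the cyclic invariance of the trace together with the independence of $(U',V')$ from $(U,V)$. The identity has the flavor of the standard fact $\|M\|_F^2 = \tr(MM^\tran)$, but with two independent copies supplying the factorization that turns a trace of a product of matrices into a squared scalar inner product. Once the setup is clear, the computation is essentially one line.

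Concretely, I would first write
\[
\ip{\E UU^\tran}{\E VV^\tran} = \tr\bigl((\E UU^\tran)(\E VV^\tran)^\tran\bigr) = \tr\bigl((\E UU^\tran)(\E VV^\tran)\bigr),
\]
using symmetry of $\E VV^\tran$. Next, since $(U',V')$ is an independent copy of $(U,V)$, the matrix $\E VV^\tran$ equals $\E V'V'^\tran$; moreover, by independence of the two copies, the product of expectations is the expectation of the product. Pulling the trace inside the expectation, this gives
\[
\ip{\E UU^\tran}{\E VV^\tran} = \E\,\tr\bigl(UU^\tran V'V'^\tran\bigr).
\]
Finally, applying the cyclic property of the trace and recognising that a trace of a $1\times 1$ matrix is itself,
\[
\tr(UU^\tran V'V'^\tran) = \tr(V'^\tran U\cdot U^\tran V') = (U^\tran V')^2 = \ip{U}{V'}^2,
\]
which yields the claimed identity after taking expectations.

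The ``in particular'' statement then follows by specialising $V=U$, so that $V'=U'$, and noting that $\|\E UU^\tran\|_F^2 = \ip{\E UU^\tran}{\E UU^\tran}$.

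There is no real obstacle here: the only point that requires a moment of care is the legitimacy of replacing $\E VV^\tran$ by $\E V'V'^\tran$ and then merging the two expectations into a single one over the product space, which is justified by the independence of the two copies and Fubini. Everything else is the cyclic trace identity.
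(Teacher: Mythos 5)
Your proof is correct and follows the same route as the paper's: replace $\E VV^\tran$ by $\E V'V'^\tran$ using identical distribution, merge the two expectations via independence and Fubini, and reduce the matrix inner product of the rank-one factors to the scalar inner product squared. The paper keeps the computation in the $\ip{\cdot}{\cdot}$ notation while you unfold it into a trace and use cyclicity, but this is a cosmetic difference, not a different argument.
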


\begin{proof}
The first identity readily follows if we first use the identical distribution, and then independence:
$$
\ip{\E UU^\tran}{\E VV^\tran}
= \ip{\E UU^\tran}{\E V'(V')^\tran}
= \E \ip{UU^\tran}{V'(V')^\tran}
= \E \ip{U}{V'}^2.
$$
The lemma is proved.
\end{proof}

\medskip

\begin{proof}[Proof of Proposition~\ref{prop: covariance increment}]
Expanding the square of the Frobenius norm, we can express the left hand side of \eqref{eq: covariance increment} as 
$\norm[1]{\E XX^\tran}_F^2  - 2\ip{\E XX^\tran}{\E YY^\tran} + \norm[0]{\E YY^\tran}_F^2$. 
After simplification, we see that inequality \eqref{eq: covariance increment} is equivalent to 
$$
\norm{\E YY^\tran}_F^2 
\le \ip{\E XX^\tran}{\E YY^\tran}.
$$
Using Lemma~\ref{lem: frobenius by ip} we can rewrite this as 
\begin{equation}	\label{eq: YY' XY'}
\E \ip{Y}{Y'}^2
\le \E \ip{X}{Y'}^2,
\end{equation}
To check \eqref{eq: YY' XY'}, recall that $Y = \E[X|\FF]$ and apply the conditional Jensen's inequality. 
\end{proof}

The covariance increment has a nice representation, which will come handy in our further analysis.

\begin{proposition}		\label{prop: increment ip}
\begin{equation}	\label{eq: increment}
\norm{\E XX^\tran}_F^2 - \norm{\E YY^\tran}_F^2
= \E \Big( \ip{X}{X'} - \ip{Y}{Y'} \Big)^2.
\end{equation}
\end{proposition}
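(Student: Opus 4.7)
The plan is to expand the square on the right-hand side of \eqref{eq: increment} and match it to the left-hand side using Lemma~\ref{lem: frobenius by ip}. Writing
$$
\E\bigl(\ip{X}{X'} - \ip{Y}{Y'}\bigr)^2 = \E\ip{X}{X'}^2 - 2\E\ip{X}{X'}\ip{Y}{Y'} + \E\ip{Y}{Y'}^2,
$$
and applying Lemma~\ref{lem: frobenius by ip} (once with $U=X$ and once with $U=Y$) to recognize the first and third terms as $\|\E XX^\tran\|_F^2$ and $\|\E YY^\tran\|_F^2$, the identity \eqref{eq: increment} reduces to showing the single cross-term identity
$$
\E\ip{X}{X'}\ip{Y}{Y'} = \E\ip{Y}{Y'}^2.
$$

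To establish this, I would use the tower property together with the observation (stated in the paper just before Proposition~\ref{prop: covariance increment}) that $\ip{Y}{Y'} = \E[\ip{X}{X'} \mid \FF \times \FF']$. Since $\ip{Y}{Y'}$ is itself $\FF \times \FF'$-measurable, it pulls out of the conditional expectation, giving
$$
\E\ip{X}{X'}\ip{Y}{Y'} = \E\bigl[\E[\ip{X}{X'} \mid \FF\times\FF']\,\ip{Y}{Y'}\bigr] = \E\ip{Y}{Y'}^2,
$$
which is exactly what is needed. Plugging this back and cancelling yields \eqref{eq: increment}.

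There is no real obstacle here: the content is essentially the $L^2$-orthogonality of $\ip{X}{X'} - \ip{Y}{Y'}$ to $\ip{Y}{Y'}$, which is the standard ``best $L^2$-approximation'' property of conditional expectation applied to the product sigma-algebra $\FF \times \FF'$. The only mild care needed is to confirm that $\ip{Y}{Y'}$ really is the conditional expectation of $\ip{X}{X'}$ with respect to $\FF\times\FF'$, which follows from the independence of the two copies and Fubini, and which the paper has already noted in its setup.
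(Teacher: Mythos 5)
Your proof is correct and follows essentially the same route as the paper: both hinge on Lemma~\ref{lem: frobenius by ip} to convert the Frobenius norms into $\E\ip{X}{X'}^2$ and $\E\ip{Y}{Y'}^2$, and on the observation that $\ip{Y}{Y'}=\E[\ip{X}{X'}\mid\FF\times\FF]$. The only cosmetic difference is that the paper concludes by citing the law of total variance \eqref{eq: ltv} for the pair $(\ip{X}{X'},\ip{Y}{Y'})$, whereas you re-derive that identity from scratch by expanding the square and applying the tower property; the two are the same computation.
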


\begin{proof}
By Lemma~\ref{lem: frobenius by ip}, the left hand side of \eqref{eq: increment} equals
$\E \ip{X}{X'}^2 - \E \ip{Y}{Y'}^2$.
Note that $\ip{Y}{Y'}=\E(\ip{X}{X'}|\,\FF\times\FF)$.
To finish the proof, apply the law of total variance \eqref{eq: ltv} for $\ip{X}{X'}$ instead of $X$
and $\ip{Y}{Y'}$ instead of $Y$.
\end{proof}

\section{Bounding the covariance increment}

\begin{theorem}		\label{thm: regularity}
Let $X$ be a random vector taking values $\R^d$ such that $\norm{X}_2 \le 1$ a.s.
Then, for any $r \in \N$, there exists a partition of the sample space into at most $2^r$ parts
such that for the $\s$-algebra $\FF$ generated by this partition, the conditional expectation
$Y = \E[X|\FF]$ satisfies  
$$
\E \Big( \ip{X}{X'} - \ip{Y}{Y'} \Big)^2
\le \frac{\pi^2}{r}.
$$
\end{theorem}

The proof of Theorem~\ref{thm: regularity} will utilize Grothendieck's identity (see e.g. \cite[Lemma~3.6.6]{vershyninbook}):

\begin{lemma}[Grothendieck's identity]\label{groid}
 Let $x, x'$ be a pair of unit vectors in $\R^d$, and let $g \sim N(0,I_d)$. 
 Then 
 $$
 \ip{x}{x'} = \sin \left[ \frac{\pi}{2} \E \sign \ip{x}{g} \sign \ip{x'}{g} \right].
 $$
\end{lemma}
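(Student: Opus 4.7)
The plan is to prove Grothendieck's identity by reducing to a two-dimensional computation and then invoking the rotational invariance of the standard Gaussian, so that the expectation becomes a simple arc-length probability on the unit circle.

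First I would observe that the joint distribution of the pair $(\ip{x}{g},\ip{x'}{g})$ depends only on the two-dimensional subspace spanned by $x$ and $x'$, because the orthogonal complement contributes zero to both inner products and $g$ is rotationally invariant. So I can assume $d=2$ without loss of generality, writing $x=(1,0)$ and $x'=(\cos\theta,\sin\theta)$, where $\theta\in[0,\pi]$ is chosen so that $\ip{x}{x'}=\cos\theta$.

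Next I would decompose $g\sim N(0,I_2)$ in polar coordinates as $g=R(\cos\Phi,\sin\Phi)$, where $\Phi$ is uniform on $[0,2\pi)$, $R\ge 0$, and $\Phi$ and $R$ are independent. Then $\sign\ip{x}{g}=\sign(\cos\Phi)$ and $\sign\ip{x'}{g}=\sign(\cos(\Phi-\theta))$; in particular, the sign terms depend only on $\Phi$, so $R$ drops out. The two signs disagree precisely when $\Phi$ lies in the union of the two open arcs of angular length $\theta$ separating the half-planes $\{\ip{x}{\cdot}>0\}$ and $\{\ip{x'}{\cdot}>0\}$, which has total measure $2\theta$ out of $2\pi$. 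Thus
\begin{equation*}
\E\sign\ip{x}{g}\sign\ip{x'}{g}
=\P\{\text{agree}\}-\P\{\text{disagree}\}
=1-\frac{2\theta}{\pi}.
\end{equation*}

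Finally I would substitute this into the right-hand side of the identity: using $\theta=\arccos\ip{x}{x'}$ and the elementary relation $\arccos(t)=\tfrac{\pi}{2}-\arcsin(t)$,
\begin{equation*}
\sin\!\left[\frac{\pi}{2}\E\sign\ip{x}{g}\sign\ip{x'}{g}\right]
=\sin\!\left[\frac{\pi}{2}-\theta\right]
=\sin\!\left[\arcsin\ip{x}{x'}\right]
=\ip{x}{x'},
\end{equation*}
which is the claimed identity. There is no real obstacle here; the only point that requires a little care is the geometric bookkeeping of which arcs contribute to the disagreement event, and confirming the reduction to two dimensions is legitimate for the full expectation (not just for fixed realizations of $g$).
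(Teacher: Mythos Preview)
Your proof is correct and is the standard argument for Grothendieck's identity. The paper does not actually prove this lemma; it merely cites \cite[Lemma~3.6.6]{vershyninbook}, where essentially the same two-dimensional reduction and arc-length computation you give appears.
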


\begin{proof}[Proof of Theorem~\ref{thm: regularity}]
{\em Step 1.} Let us first make a stronger assumption, namely that $\norm{X}_2=1$ a.s., 
and prove a weaker conclusion, namely that there exists a random variable $Z$ measurable with respect to the product $\s$-algebra $\FF \times \FF$ and such that 
\begin{equation}	\label{eq: Z}
\norm{\ip{X}{X'} - Z}_{L^2}
\le \frac{\pi}{2\sqrt{r}}.
\end{equation}
To this end, consider independent random vectors $g_1,\ldots,g_k \in N(0,I_d)$,
and for $x,x'\in\mathbb{R}^{d}$, define the random variable
$$
F_g(x,x') = \sin \left[ \frac{\pi}{2} \cdot \frac{1}{r} \sum_{k=1}^r \sign \ip{x}{g_k} \sign \ip{x'}{g_k} \right].
$$
Denoting $\xi_k = \sign \ip{x}{g_k} \sign \ip{x'}{g_k}$, noting that the function $\sin(\cdot)$ is $1$-Lipschitz and applying Lemma \ref{groid}, we obtain 
\begin{equation}	\label{eq: xx' Fxx'}
\abs{\ip{x}{x'}-F_g(x,x')}
\le \frac{\pi}{2} \cdot \frac{1}{r} \abs{\sum_{k=1}^r (\xi_k-\E \xi_k)}.
\end{equation}
By independence, this yields
$$
\E_g \left( \ip{x}{x'}-F_g(x,x') \right)^2
\le \left( \frac{\pi}{2r} \right)^2 \cdot r \Var(\xi_1)
\le \frac{\pi^2}{4r}.
$$
Substitute $x=X$, $x'=X'$ and take expectation with respect to $X$ and $X'$. 
By Fubini theorem, there exists a realization of the random vectors $g_1,\ldots,g_k$ such that 
$$
\E \left( \ip{X}{X'}-F_g(X,X') \right)^2
\le \frac{\pi^2}{4r},
$$
Fix such a realization. 
Let $\FF = \s(V(X))$ be the $\s$-algebra generated by the random vector $V(X) = (\sign \ip{x}{g_k})_{k=1}^r$. Since $V(X)$ takes at most $2^r$ values, $\FF$ satisfies the requirement of the theorem. Moreover, the random vector 
$$
Z = F_g(X,X') = \sin \left( \frac{\pi}{2r} \ip{V(X)}{V(X')} \right)
$$
is measurable with respect to the product sigma-algebra $\FF \times \FF$. 
Thus, we proved \eqref{eq: Z}. 

\medskip

{\em Step 2: replacing $Z$ with $\ip{Y}{Y'}$.}
The conditional expectation $\E \left[ \cdot|\,\FF \times \FF \right]$ is an orthogonal projection in $L^2$
onto the subspace of random variables that are $\FF \times \FF$-measurable. 
Since the random variable $Z$ constructed in the previous step is $\FF \times \FF$-measurable, 
and $\ip{Y}{Y'} = \E \left[ \ip{X}{X'}|\,\FF \times \FF \right]$, it follows from \eqref{eq: Z} that 
$$
\norm{\ip{X}{X'} - \ip{Y}{Y'}}_{L^2}
\le \norm{\ip{X}{X'} - Z}_{L^2}
\le \frac{\pi}{2\sqrt{r}}.
$$

{\em Step 3: removing the unit norm requirement.}
We proved the theorem under the additional assumption that $X$ is a unit random vector.
Now let $X$ be an arbitrary random vector satisfying $\norm{X}_2 \le 1$.
Let $N \in \N$, and consider the random vector 
$W = \sqrt{1-\norm{X}_2^2} \cdot \theta$,
where $\theta$ is a random vector that is 
uniformly distributed in the unit basis $\{e_1,\ldots,e_N\}$ of $\R^N$.
Then the direct sum $X \oplus W = (X_1,\ldots,X_d, W_1,\ldots,W_N)$ is a unit random vector 
in $\R^{d+N}$. Applying the previous step for $X \oplus W$, we find that
\begin{align} 
\frac{\pi}{2\sqrt{r}}
  &\ge \norm{\ip{X \oplus W}{X' \oplus W'} - \ip{Y \oplus \eta}{Y' \oplus \eta'}}_{L^2}
	\quad \text{(where $\eta = \E(W|\FF)$)} \nonumber\\
  &= \norm{\ip{X}{X'} - \ip{Y}{Y'} + \ip{W}{W'} - \ip{\eta}{\eta'}}_{L^2} \nonumber\\
  &\ge \norm{\ip{X}{X'} - \ip{Y}{Y'}}_{L^2} - \norm{\ip{W}{W'} - \ip{\eta}{\eta'}}_{L^2}. \label{eq: direct sum}
\end{align}
Since $\ip{\eta}{\eta'} = \E(\ip{W}{W'}|\,\FF\times\FF)$, 
the law of total variance \eqref{eq: ltv} and definition of $W$ yields
\begin{equation}	\label{eq: WW'}
\norm{\ip{W}{W'} - \ip{\eta}{\eta'}}_{L^2}
\le \norm{\ip{W}{W'}}_{L^2}
\le \norm{\ip{\theta}{\theta'}}_{L^2}
= \frac{1}{\sqrt{N}},
\end{equation}
where the last identity follows since $\ip{\theta}{\theta'}$ is Bernoulli with parameter $1/N$.
Taking $N$ to be large enough, we conclude from \eqref{eq: direct sum} that 
$$
\norm{\ip{X}{X'} - \ip{Y}{Y'}}_{L^2} \le \frac{\pi}{\sqrt{r}}.
$$
Theorem~\ref{thm: regularity} is proved.
\end{proof}
\begin{remark}\label{thm: regularityrem}
In the conclusion of Theorem \ref{thm: regularity}, we can replace the constant $\pi^{2}$ by $\pi^{2}/4+\epsilon$ for any fixed $\epsilon>0$.
\end{remark}

\section{Subgaussian error}

We can extend the bound in Theorem~\ref{thm: regularity} to the subgaussian norm, 
which is the Orlicz norm with respect to the Young function $\psitwo(x)=e^{x^2}-1$. 
Thus, a random variable $X$ is subgaussian if 
\begin{equation}	\label{eq: subgaussian}
\norm{X}_\psitwo = \inf\{t>0:\; \E \psitwo(X/t) \le 1\} < \infty,
\end{equation}
and this quantity is called the subgaussian norm of $X$, 
see \cite[Sections~2.5, 2.7.1]{vershyninbook}.

\begin{theorem}		\label{thm: regularity subgaussian}
Let $X$ be a random vector taking values $\R^d$ such that $\norm{X}_2 \le 1$ a.s.
Then, for any $r \in \N$, there exists a partition of the sample space into at most $2^r$ parts
such that for the $\s$-algebra $\FF$ generated by this partition, the conditional expectation
$Y = \E[X|\FF]$ satisfies  
\begin{equation}	\label{eq: regularity subgaussian}
\norm{\ip{X}{X'} - \ip{Y}{Y'}}_\psitwo
\le \frac{C}{\sqrt{r}}.
\end{equation}
\end{theorem}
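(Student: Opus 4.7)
The plan is to retrace the three-step proof of Theorem~\ref{thm: regularity}, upgrading every $L^{2}$ estimate to a $\psitwo$ estimate. The key new analytic input enters only at the very first step: bound \eqref{eq: xx' Fxx'} already presents $\ip{x}{x'} - F_g(x,x')$ as $\pi/(2r)$ times a sum of $r$ independent bounded mean-zero random variables $\xi_k - \E \xi_k$, so Hoeffding's inequality yields the uniform subgaussian estimate
$$
\|\ip{x}{x'} - F_g(x,x')\|_{\psitwo(g)} \le \frac{C}{\sqrt{r}}
$$
for every pair of unit vectors $x,x' \in \R^{d}$, with an absolute constant $C$.

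To pass from deterministic $x,x'$ to $X,X'$, I would unpack $\psitwo$ via its Young function and fix the threshold $t = C/\sqrt{r}$: the previous display says that $\E_{g} \psitwo\bigl(t^{-1}(\ip{x}{x'} - F_g(x,x'))\bigr) \le 1$ pointwise in unit $x,x'$. Substituting $X,X'$, taking expectation and applying Fubini produces a realization of $g_1,\ldots,g_r$ (and hence a partition of the sample space into at most $2^{r}$ parts, exactly as in Step~1 of the original proof) for which $\|\ip{X}{X'} - Z\|_{\psitwo} \le C/\sqrt{r}$, where $Z = F_g(X,X')$ is measurable with respect to $\FF \times \FF$. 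For the analog of Step~2, I would then write
$$
\ip{X}{X'} - \ip{Y}{Y'} = (\ip{X}{X'} - Z) - \E\bigl[\ip{X}{X'} - Z \,\big|\, \FF \times \FF\bigr],
$$
using that $Z$ is $\FF \times \FF$-measurable and that $\ip{Y}{Y'} = \E[\ip{X}{X'} \mid \FF \times \FF]$. Because conditional expectation is a contraction in every Orlicz norm (Jensen applied to the convex Young function $\psitwo$), the triangle inequality gives $\|\ip{X}{X'} - \ip{Y}{Y'}\|_{\psitwo} \le 2C/\sqrt{r}$.

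Step~3 reuses the direct-sum trick of Theorem~\ref{thm: regularity}; the only adjustment is to upgrade \eqref{eq: WW'} to a $\psitwo$ bound. Since $|\ip{W}{W'}| \le \ip{\theta}{\theta'}$ and $\ip{\theta}{\theta'}$ is Bernoulli with parameter $1/N$, a direct computation gives $\|\ip{\theta}{\theta'}\|_{\psitwo} = O\bigl(1/\sqrt{\log N}\bigr)$, and one more application of the Orlicz Jensen handles $\ip{\eta}{\eta'} = \E[\ip{W}{W'} \mid \FF \times \FF]$. Choosing $N$ large enough that $1/\sqrt{\log N} \le 1/\sqrt{r}$ (for example $N \ge e^{r}$) makes the direct-sum contribution $O(1/\sqrt{r})$, and combining the three steps delivers \eqref{eq: regularity subgaussian}. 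The only point that genuinely requires care is the Fubini step: it must be applied to the explicit function $\psitwo(\cdot/t)$ at a pre-chosen threshold $t$, not to the infimum defining $\|\cdot\|_{\psitwo}$; past that bit of bookkeeping I do not foresee a real obstacle, since each of the $L^{2}$ tools used in Theorem~\ref{thm: regularity} (Bernstein/Hoeffding-type concentration and Jensen's inequality for conditional expectation) has a clean Orlicz counterpart.
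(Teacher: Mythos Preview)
Your proposal is correct and follows essentially the same route as the paper: Hoeffding's inequality in Step~1, the triangle-inequality-plus-conditional-Jensen argument in Step~2 (which the paper records separately as Lemma~\ref{lem: centering}), and the direct-sum trick with the Bernoulli $\psitwo$ bound $O(1/\sqrt{\log N})$ in Step~3. Your explicit warning about applying Fubini at a fixed threshold rather than to the infimum defining $\|\cdot\|_{\psitwo}$ matches the paper's own treatment.
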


We can prove this result by modifying the proof of Theorem~\ref{thm: regularity}.
Let us explain how to do this. 

In Step~1, instead of using additivity of variance, we can use Hoeffding's inequality to control the sum of independent Bernoulli random variables $\xi_k$. Denoting $\zeta_k = \xi_k-\E\xi_k$, we have 
$\norm{\sum_{k=1}^r \zeta_k}_\psitwo \lesssim \sqrt{r}$, see \cite[Proposition~2.6.1]{vershyninbook}. Using this in \eqref{eq: xx' Fxx'}, we get 
$$\norm{\ip{x}{x'}-F_g(x,x')}_\psitwo
\lesssim 1/\sqrt{r}.$$
By definition of the subgaussian norm, this means that 
$$\E_g \psitwo\left( (\ip{x}{x'}-F_g(x,x')) c\sqrt{r} \right) \le 1,$$ where $c>0$ is some absolute constant.
Substituting here $x=X$ and $x'=X'$ and applying the Fubini inequality as in Step~1, 
we obtain the following version of \eqref{eq: Z}:
$$
\norm{\ip{X}{X'} - Z}_\psitwo
\lesssim \frac{1}{\sqrt{r}}.
$$

In Step~2, although the conditional expectation is not a metric projection in the subgaussian norm,
it is an approximate metric projection:

\begin{lemma}		\label{lem: centering}
  Let $X$ be a random variable and $\FF$ be a $\s$-algebra. 
  Then, for any random variable $Z$ that is $\FF$-measurable, and any $p \ge 1$, we have
  $$
  \norm{X - \E(X|\FF)}_\psitwo \le 2 \norm{X-Z}_\psitwo.
  $$
\end{lemma}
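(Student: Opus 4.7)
The key observation is that since $Z$ is $\FF$-measurable, $\E(Z\mid\FF) = Z$, so we can split
$$X - \E(X\mid\FF) = (X-Z) - \bigl(\E(X\mid\FF)-Z\bigr) = (X-Z) - \E(X-Z\mid\FF).$$
By the triangle inequality for the subgaussian norm,
$$\norm{X - \E(X\mid\FF)}_\psitwo \le \norm{X-Z}_\psitwo + \norm{\E(X-Z\mid\FF)}_\psitwo.$$
So the whole proof reduces to showing that conditional expectation is a contraction in the subgaussian norm: $\norm{\E(W\mid\FF)}_\psitwo \le \norm{W}_\psitwo$ for every random variable $W$.

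To prove this contraction property, I would invoke the conditional Jensen inequality applied to the convex, even function $\psitwo(x) = e^{x^2}-1$. Concretely, fix any $t > \norm{W}_\psitwo$, so that $\E\,\psitwo(W/t) \le 1$. Since $\psitwo$ is convex, the conditional Jensen inequality gives
$$\psitwo\!\left(\E(W\mid\FF)/t\right) = \psitwo\!\left(\E(W/t\mid\FF)\right) \le \E\bigl(\psitwo(W/t)\,\big|\,\FF\bigr)$$
almost surely. Taking expectations yields $\E\,\psitwo(\E(W\mid\FF)/t) \le \E\,\psitwo(W/t) \le 1$, so by the definition \eqref{eq: subgaussian} of the subgaussian norm, $\norm{\E(W\mid\FF)}_\psitwo \le t$. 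Letting $t \downarrow \norm{W}_\psitwo$ proves the contraction bound.

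Combining the contraction (applied with $W = X-Z$) with the triangle inequality above immediately gives the claimed bound with constant $2$. I do not expect any serious obstacle: the crucial point is the rewriting that uses $\E(Z\mid\FF)=Z$, which converts the problem into bounding an object of the form $W - \E(W\mid\FF)$ and puts us in a position to exploit convexity of $\psitwo$. This is essentially the reason why, even though conditional expectation is not the exact metric projection for the Orlicz norm, it is an approximate one with factor at most $2$.
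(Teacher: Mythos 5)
Your proof is correct and follows essentially the same route as the paper: add and subtract $Z$, apply the triangle inequality, rewrite $\E(X|\FF)-Z = \E(X-Z|\FF)$ using $\FF$-measurability of $Z$, and bound this term by the contraction property of conditional expectation in the $\psi_2$ norm via conditional Jensen applied to the convex function $\psitwo$. You merely spell out the contraction step in more detail than the paper, which states it in one line.
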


\begin{proof}
Subtracting and adding $Z$ and using triangle inequality, we get
$$
\norm{X - \E(X|\FF)}_\psitwo 
\le \norm{X-Z}_\psitwo + \norm{\E(X|\FF)-Z}_\psitwo.
$$
Since $Z$ is $\FF$-measurable, we have
$$
\norm{\E(X|\FF)-Z}_\psitwo
= \norm{\E(X-Z|\FF)}_\psitwo
\le \norm{X-Z}_\psitwo,
$$
where the last step follows from definition of subgaussian norm \eqref{eq: subgaussian}
and conditional Jensen's inequality.
Combine the two bounds to complete the proof.
\end{proof}
 
Using this lemma for $\ip{X}{X'}$ instead of $X$, we obtain in Step~2 that 
$$
\norm{\ip{X}{X'} - \ip{Y}{Y'}}_\psitwo
\le 2 \norm{\ip{X}{X'} - Z}_\psitwo
\lesssim \frac{1}{\sqrt{r}}.
$$

In Step~3, we argue in a similar way about the subgaussian norm. 
The bound \eqref{eq: WW'} becomes
$$
\norm{\ip{W}{W'} - \ip{\eta}{\eta'}}_\psitwo
\le 2\norm{\ip{W}{W'}}_\psitwo
\le \norm{\ip{\theta}{\theta'}}_\psitwo
\lesssim \frac{1}{\sqrt{\log N}},
$$
Here in the first step we use Lemma~\ref{lem: centering} for $X = \ip{W}{W'}$ and $Z=0$, 
and the last step is a straightforward bound on the subgaussian norm of 
a Bernoulli random variable with parameter $1/N$.
This completes the proof.

\section{Implications}

\subsection{Covariance loss}
Combining Proposition~\ref{prop: covariance increment}, Proposition~\ref{prop: increment ip} 
and Theorem~\ref{thm: regularity}, we obtain:

\begin{corollary}[Covariance loss]		\label{cor: covariance loss}
Let $X$ be a random vector taking values $\R^d$ such that $\norm{X}_2 \le 1$ a.s.
Then, for any $r \in \N$, there exists a partition of the sample space into at most $2^r$ parts
such that for the $\s$-algebra $\FF$ generated by this partition, the conditional expectation
$Y = \E[X|\FF]$ satisfies  
$$
\norm{\E XX^\tran - \E YY^\tran}_F
\le \frac{\pi}{\sqrt{r}}.
$$
\end{corollary}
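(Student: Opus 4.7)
The plan is to chain the three results developed earlier in the paper. First, I would invoke Theorem~\ref{thm: regularity} for the given random vector $X$ and parameter $r$: this yields a partition of the sample space into at most $2^r$ parts whose generated $\sigma$-algebra $\FF$ makes the conditional expectation $Y = \E[X|\FF]$ satisfy the inner-product bound
\begin{equation*}
\E\bigl(\ip{X}{X'} - \ip{Y}{Y'}\bigr)^2 \le \frac{\pi^2}{r}.
\end{equation*}

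Next, to translate this into a statement about the Frobenius norm of the covariance loss, I would apply Proposition~\ref{prop: increment ip} followed by Proposition~\ref{prop: covariance increment}. Concretely, Proposition~\ref{prop: increment ip} identifies the right-hand side above with the covariance increment $\|\E XX^\tran\|_F^2 - \|\E YY^\tran\|_F^2$, while Proposition~\ref{prop: covariance increment} shows that this covariance increment dominates $\|\E XX^\tran - \E YY^\tran\|_F^2$. Chaining them together gives
\begin{equation*}
\|\E XX^\tran - \E YY^\tran\|_F^2 \;\le\; \|\E XX^\tran\|_F^2 - \|\E YY^\tran\|_F^2 \;=\; \E\bigl(\ip{X}{X'} - \ip{Y}{Y'}\bigr)^2 \;\le\; \frac{\pi^2}{r},
\end{equation*}
and taking square roots yields the claimed bound $\pi/\sqrt{r}$.

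I do not anticipate any real obstacle here: the corollary is essentially a direct assembly of three earlier results, and since Propositions~\ref{prop: covariance increment} and~\ref{prop: increment ip} are valid for an arbitrary $\sigma$-algebra~$\FF$, they specialize at once to the one constructed by Theorem~\ref{thm: regularity}. All the substantive work---the Grothendieck-identity-based randomized rounding---has already been carried out in the proof of Theorem~\ref{thm: regularity}. The only thing worth checking is that the constants line up cleanly, and they do: all three tools are stated in squared form, so the single square root at the very end converts $\pi^2/r$ into $\pi/\sqrt{r}$ without any additional loss.
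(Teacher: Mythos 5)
Your proposal is correct and matches the paper's own argument exactly: the paper states the corollary is obtained by combining Proposition~\ref{prop: covariance increment}, Proposition~\ref{prop: increment ip}, and Theorem~\ref{thm: regularity}, which is precisely the chain you carry out. The constants line up as you say, and taking a single square root at the end gives $\pi/\sqrt{r}$.
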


In particular, for any $k$ with $\log_{2}k\in \N$, there exists a partition of the sample space into at most $k$ parts
such that for the $\s$-algebra $\FF$ generated by this partition, we have
\begin{equation}\label{covloss2}
\norm{\E XX^\tran - \E YY^\tran}_F
\le \frac{\pi}{\sqrt{\log_2 k}}.
\end{equation}

\begin{remark}[Optimality]
  Proposition 3.14 in~\cite{BSV2021a} implies that there exists a random vector $X$ taking values in the   unit ball of $\mathbb{R}^{d}$ such $\norm{\E XX^\tran - \E YY^\tran}_F\geq\frac{\sqrt{\log_2(e)}}{80\sqrt{\log_2 k}}$ for any  $\s$-algebra $\FF$ generated by a partition into at most $k$ parts. 
Thus the bound in~\eqref{covloss2} is sharp up to an absolute constant.
\end{remark}

\subsection{Weak Szemeredi regularity}

\begin{theorem}[Weak Szemeredi regularity]			\label{thm: wsr}
  Let $A$ be an $n \times n$ positive semidefinite matrix such that $A_{ii} \le 1$ for all $i$.
  Then, for any $r \in \N$, there exists a partition 
  $[n] = I_1 \cup \ldots \cup I_k$ with $k \le 2^r$,  
  and a matrix $B$ that is constant on each block $I_i \times I_j$ and such that 
  \begin{equation}\label{frobnorm}
  \frac{1}{n} \norm{A-B}_F 
  \le \frac{\pi}{\sqrt{r}}.
  \end{equation}
  Moreover, $B$ can be computed by averaging the entries of $A$ in each block.
\end{theorem}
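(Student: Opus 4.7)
The plan is to reduce this matrix statement to Theorem~\ref{thm: regularity} by encoding the entries of $A$ as inner products of deterministic vectors, and then viewing the block-averaging operation as a conditional expectation. Since $A$ is positive semidefinite, I factor $A = U^\tran U$ for some $d \times n$ matrix $U$ with columns $u_1,\ldots,u_n \in \R^d$ (one may take $d = \rank A$). Then $A_{ij} = \ip{u_i}{u_j}$, and the hypothesis $A_{ii} \le 1$ forces $\|u_i\|_2 \le 1$ for every $i$.

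Next I introduce a uniform random index $\xi$ on $[n]$ and set $X = u_\xi \in \R^d$, which satisfies $\|X\|_2 \le 1$ almost surely. Applying Theorem~\ref{thm: regularity} to $X$ produces a partition of the sample space $[n] = I_1 \cup \cdots \cup I_k$ with $k \le 2^r$ such that, writing $\FF$ for the generated $\s$-algebra and $Y = \E[X \mid \FF]$, we have $\E(\ip{X}{X'} - \ip{Y}{Y'})^2 \le \pi^2/r$. On each block $I_\ell$, the conditional expectation is the deterministic vector $m_\ell := \tfrac{1}{|I_\ell|} \sum_{k \in I_\ell} u_k$, so I define $B \in \R^{n \times n}$ by $B_{ij} := \ip{m_\ell}{m_m}$ whenever $i \in I_\ell$ and $j \in I_m$. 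By bilinearity,
$$B_{ij} = \frac{1}{|I_\ell|\,|I_m|} \sum_{k \in I_\ell,\, k' \in I_m} \ip{u_k}{u_{k'}} = \frac{1}{|I_\ell|\,|I_m|} \sum_{k \in I_\ell,\, k' \in I_m} A_{kk'},$$
so $B$ is constant on each block $I_\ell \times I_m$ and coincides with the block-average of $A$, as asserted by the theorem.

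To conclude, observe that $\ip{X}{X'} = A_{\xi,\xi'}$ and $\ip{Y}{Y'} = B_{\xi,\xi'}$ by construction, so averaging over the uniformly distributed pair $(\xi,\xi')$ gives
$$\frac{\|A - B\|_F^2}{n^2} = \frac{1}{n^2}\sum_{i,j=1}^n (A_{ij} - B_{ij})^2 = \E(\ip{X}{X'} - \ip{Y}{Y'})^2 \le \frac{\pi^2}{r},$$
which is exactly~\eqref{frobnorm}. The main analytic work is absorbed into Theorem~\ref{thm: regularity}; what remains is essentially a dictionary translation, so I do not expect a genuine obstacle. The only subtlety worth flagging is that the encoding depends crucially on the PSD factorization of $A$: this is what converts the diagonal bound $A_{ii} \le 1$ into the unit-ball constraint $\|u_i\|_2 \le 1$ needed to invoke Theorem~\ref{thm: regularity}, and without positive semidefiniteness the random-vector picture would not be available.
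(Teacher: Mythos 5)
Your proof is correct and follows exactly the same route as the paper's: factor the PSD matrix $A$ as a Gram matrix of vectors $u_1,\ldots,u_n$ with $\|u_i\|_2\le 1$, take $X=u_\xi$ for a uniform index $\xi$, and invoke Theorem~\ref{thm: regularity}. The paper states this in one line; you have simply made explicit the verification that the conditional expectation corresponds to block-averaging and that $\E(\ip{X}{X'}-\ip{Y}{Y'})^2$ equals the normalized squared Frobenius error.
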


\begin{proof}
By assumption, $A$ can be represented as the Gram matrix of some vectors $x_1,\ldots,x_n$ satisfying $\norm{x_i}_2 \le 1$ for all $i$. Thus $A = [\ip{x_i}{x_j}]_{i,j=1}^n$. 
Apply Theorem~\ref{thm: regularity} for the random vector $X$ that is uniformly distributed on $\{x_1,\ldots,x_n\}$ to complete the proof.
\end{proof}

\begin{remark}

Rewriting the approximation error in Theorem~\ref{thm: wsr} as 
$$
\frac{1}{n^2} \sum_{i,j=1}^n (A_{ij}-B_{ij})^2 \le \frac{\pi^2}{r},
$$
we can interpret it as a bound on the {\em mean squared error} of the entries.
\end{remark}

\begin{remark}

Applying Theorem~\ref{thm: regularity subgaussian} instead of Theorem~\ref{thm: regularity}, we can replace the Frobenius norm in~\eqref{frobnorm} by stronger matrix norms, such as the $\ell_p$ norm of the entries.
\end{remark}


\if 0
\begin{theorem}[Weak Szemeredi regularity for kernels]			\label{thm: wsr kernels}	
  Let $\Omega$ be a closed subset of $\R^d$, 
  $\mu$ be a Borel probability measure on $\Omega$, and 
  $K(x,y): \Omega \times \Omega \to \R$ be a continuous positive semidefinite kernel 
  satisfying $\norm{K}_{L^2(\mu \times \mu)} < \infty$ and $K(x,x) \le 1$ for all $x$.  
  Then, for any $r \in \N$, there exists a partition 
  $\Omega = I_1 \cup \ldots \cup I_k$ with $ \le 2^r$,  
  and a function $L(x,y)$ that is constant on each block $I_i \times I_j$ and such that 
  $$
  \norm{K-L}_{L^2(\mu \times \mu)}
  \le \frac{\pi}{2 \sqrt{r}}.
  $$
  Moreover, $L$ can be computed by averaging $K$ on each block.
\end{theorem}
\fi

\begin{theorem}[Weak Szemeredi regularity, analytic form] \label{thm: wsr kernels}	
Let $(\Omega,\mu)$ be a probability measure space. Let $K:\Omega\times\Omega\to\mathbb{R}$ be a measurable positive definite kernel such that $K(t,t)\leq 1$ for all $t\in\Omega$. Then for any $r\in\mathbb{N}$, there exists a partition $\Omega=I_{1}\cup\ldots\cup I_{k}$ with $k\leq 2^{r}$, and a function $L:\Omega\times\Omega\to\mathbb{R}$ that is constant on each block $I_{i}\times I_{j}$ such that
\begin{equation}\label{kernel}
\|K-L\|_{L^{2}(\mu\times\mu)}\leq\frac{\pi}{\sqrt{r}}.
\end{equation}
Moreover, $L$ can be computed by averaging $K$ on each block.
\end{theorem}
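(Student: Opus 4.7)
The plan is to reduce the kernel statement to the matrix regularity Theorem~\ref{thm: regularity} via a feature-map realization of $K$ together with a finite-dimensional truncation. Since $K$ is a measurable positive semidefinite kernel on the probability space $(\Omega,\mu)$ with $K(t,t)\leq 1$, Cauchy--Schwarz gives $|K(s,t)|\leq 1$, so the integral operator $T_{K}f(s)=\int K(s,t)f(t)\,d\mu(t)$ is Hilbert--Schmidt on $L^{2}(\mu)$. Its spectral decomposition produces an orthonormal sequence of eigenfunctions $(\phi_{n})$ with eigenvalues $\lambda_{n}\geq 0$ such that $K(s,t)=\sum_{n}\lambda_{n}\phi_{n}(s)\phi_{n}(t)$ in $L^{2}(\mu\times\mu)$. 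Setting $a_{n}=\sqrt{\lambda_{n}}\phi_{n}$, I obtain $\sum_{n}\|a_{n}\|_{L^{2}}^{2}=\int K(t,t)\,d\mu\leq 1$ and pointwise a.e.\ $\sum_{n}a_{n}(t)^{2}=K(t,t)\leq 1$.

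For each $N\in\N$ I define the truncated feature map $\Phi_{N}(t)=(a_{1}(t),\ldots,a_{N}(t))\in\R^{N}$ and the truncated kernel $K_{N}(s,t)=\ip{\Phi_{N}(s)}{\Phi_{N}(t)}$. Then $\|\Phi_{N}(t)\|\leq 1$ a.e., and Cauchy--Schwarz yields
$$
\|K-K_{N}\|_{L^{2}(\mu\times\mu)}^{2}\leq\Big(\sum_{n>N}\|a_{n}\|_{L^{2}}^{2}\Big)^{2}\to 0 \quad\text{as } N\to\infty.
$$
Fix $\epsilon>0$ and choose $N$ so that $\|K-K_{N}\|_{L^{2}}<\epsilon$. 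Applying Theorem~\ref{thm: regularity} in the sharper form of Remark~\ref{thm: regularityrem} to the $\R^{N}$-valued random vector $X=\Phi_{N}(T)$, where $T$ is the identity random variable on the sample space $(\Omega,\mu)$, produces a partition $\Omega=I_{1}\cup\cdots\cup I_{k}$ with $k\leq 2^{r}$ such that $Y=\E[X\,|\,\FF]$ satisfies $\|\ip{X}{X'}-\ip{Y}{Y'}\|_{L^{2}}\leq\pi/(2\sqrt{r})+\epsilon$. Since $Y$ is $\FF$-measurable and $\FF$ is generated by the partition, the function $(s,t)\mapsto\ip{Y(s)}{Y(t)}$ is constant on every block $I_{i}\times I_{j}$, and $\ip{X(s)}{X(t)}=K_{N}(s,t)$.

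Finally I define $L$ by averaging $K$ on each block; this is precisely the $L^{2}(\mu\times\mu)$-orthogonal projection of $K$ onto the finite-dimensional subspace of block-constant functions. Since $\ip{Y}{Y'}$ lies in that subspace, the projection inequality together with the triangle inequality gives
$$
\|K-L\|_{L^{2}}\leq\|K-\ip{Y}{Y'}\|_{L^{2}}\leq\|K-K_{N}\|_{L^{2}}+\|K_{N}-\ip{Y}{Y'}\|_{L^{2}}\leq 2\epsilon+\frac{\pi}{2\sqrt{r}},
$$
which is $\leq\pi/\sqrt{r}$ once $\epsilon$ is taken small enough (e.g.\ $\epsilon=\pi/(4\sqrt{r})$). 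The main technical obstacle is that Theorem~\ref{thm: regularity} is stated for vectors in $\R^{d}$ whereas $K$ may be of infinite rank; the finite-dimensional truncation disposes of this, and the step is only legitimate because the sharp constant $\pi/(2\sqrt{r})$ from Remark~\ref{thm: regularityrem} is half of the target $\pi/\sqrt{r}$, leaving just enough slack to absorb the $O(\epsilon)$ truncation error.
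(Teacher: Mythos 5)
Your overall strategy---realize $K$ via a feature map, truncate to finite dimensions, invoke Theorem~\ref{thm: regularity} (sharpened by Remark~\ref{thm: regularityrem}), and finish with the projection-plus-triangle inequality---is the right idea and is in the same spirit as the ``standard approximation argument'' the paper gestures at. The last paragraph (that block-averaging is the $L^2(\mu\times\mu)$-orthogonal projection onto block-constant functions, so $\|K-L\|\le\|K-\ip{Y}{Y'}\|$) is correct and is exactly the point the paper leaves implicit.

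However, there is a genuine gap in the feature-map step. The Hilbert--Schmidt spectral decomposition gives $K=\sum_n\lambda_n\,\phi_n\otimes\phi_n$ only as an identity in $L^2(\mu\times\mu)$, i.e.\ up to a $(\mu\times\mu)$-null set. The diagonal $\{(t,t)\}$ \emph{is} such a null set, so this identity carries no information whatsoever about $K(t,t)$. Consequently neither $\sum_n\|a_n\|_{L^2}^2=\int K(t,t)\,d\mu$ nor ``pointwise a.e.\ $\sum_n a_n(t)^2=K(t,t)$'' follows; both are instances of Mercer's theorem, which requires continuity of $K$ (the theorem in the paper assumes only measurability). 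To see that the claimed equality can fail, note that one may add any bounded nonnegative measurable function to the diagonal of $K$: this preserves positive definiteness and the hypothesis $K(t,t)\le 1$ can still hold, it leaves the integral operator $T_K$ and hence $(\lambda_n,\phi_n)$ unchanged, yet it changes $K(t,t)$. What you actually need is only the inequality $\|\Phi_N(t)\|^2=\sum_{n\le N}\lambda_n\phi_n(t)^2\le 1$ a.e.\ (so that Theorem~\ref{thm: regularity} applies), and this is also not established by your argument.

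The cleanest fix is to replace the spectral feature map by the canonical RKHS map $\Psi(t)=K(\cdot,t)\in H_K$, which satisfies $\|\Psi(t)\|_{H_K}^2=K(t,t)\le 1$ for \emph{every} $t$ by the reproducing property, with no measure-zero ambiguity; then project onto a finite-dimensional subspace of $H_K$ (projections only decrease the norm, so the a.s.\ bound $\|\Phi_N(t)\|\le 1$ is automatic) and obtain $\|K-K_N\|_{L^2(\mu\times\mu)}\to 0$ by dominated convergence from the pointwise bound $|K-K_N|\le 2$. Alternatively---and this appears to be what the paper has in mind---discretize $(\Omega,\mu)$ directly: replace $K$ by its block average $\tilde K$ over a fine finite partition; $\tilde K$ is still positive semidefinite with $\tilde K(t,t)\le 1$ (averaging $K$ over $I_i\times I_i$ gives at most $1$), is supported on finitely many atoms, and can be made $\varepsilon$-close to $K$ in $L^2(\mu\times\mu)$; then apply the finite case verbatim. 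Either repair gives the stated bound after absorbing the $\varepsilon$-loss into the factor-of-two slack $\pi/(2\sqrt r)\to\pi/\sqrt r$, exactly as you do in your last display.
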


\begin{proof}
When  $\Omega$ is finite, using Theorem~\ref{thm: regularity} (see also Remark \ref{thm: regularityrem}), one can easily establish
$\|K-L\|_{L^{2}(\mu\times\mu)}\leq\frac{\pi}{2\sqrt{r}}+\epsilon$ for any fixed $\epsilon>0$. 
The general case follows by a standard approximation argument, which we leave to the reader.
\end{proof}


\subsection{Comparison to the existing work on weak Szemeredi regularity}

Our result is more restrictive but stronger that the classical weak regularity lemma by Frieze and Kannan~\cite{frieze1999quick}. There the approximation error $R=K-L$ is measured in the {\em cut norm}
$$
\norm{R}_\square 
= \sup_{S,T \subset \Omega} \abs{\int_{S \times T} R(x,y) \, d\mu(x) d\mu(y)}.
$$
The cut norm is equivalent to the operator norm $L^1 \to L^\infty$ (see e.g. 
\cite{lovasz2007szemeredi}) and clearly satisfies $\norm{R}_\square \le \norm{R}_{L^2(\mu \times \mu)}$. So the Hilbert-Schmidt norm, which is the focus of the current paper, 
is {\em stronger} than the cut norm. 
As a result, our bound \eqref{kernel} automatically extends to the cut norm.
A similar bound for the cut norm was established in the original work of Frieze and Kannan~\cite{frieze1999quick} in wider generality: it holds only for kernels but for an arbitrary measurable function $K(x,y)$ that is pointwise bounded by $1$, see \cite[Lemma~3.1]{lovasz2007szemeredi}. In contrast to this, positive semidefiniteness is required
for any nontrivial bound on the error in the stronger Hilbert-Schmidt norm, such as the one in Theorem~\ref{thm: wsr kernels}. We will see this in the next section.

A matrix decomposition in the spirit of Theorem~\ref{thm: wsr}, i.e. with error bounded in the Frobenius norm, appears in \cite[Theorem~7]{deshpande2012zero} by Deshpande, Kannan, and Srivastava. Their theorem, which does not require the matrix to be positive semidefinite, is only nontrivial for low-rank matrices $A$, namely for matrices whose rank is at most logarithmic\footnote{Theorem~7 in \cite{deshpande2012zero} approximates $A$ by a sum of roughly $t=\rank(A) \log^4 n$ cut-matrices; they can be further broken down into $2^t$ smaller cut-matrices with disjoint support. This ultimate decomposition is nontrivial only if $2^t<n^2$.} in the dimension $n$. Our results do not have a rank restriction.

\section{Optimality}
The decay rate $1/r$ in the conclusion of Theorem \ref{thm: regularity} is optimal, because the decay rate $1/\sqrt{r}$ in the conclusion of Corollary \ref{cor: covariance loss} is optimal.

All previous results in the literature on weak Szemeredi regularity do not need the assumption that the matrix $A$ is positive semidefinite, see e.g.~\cite{frieze1999quick,lovasz2007szemeredi,bodwin2022unified}. Can it be removed from our Theorem~\ref{thm: wsr}? The following result says that it cannot be removed. 

\begin{proposition}[Positive semidefiniteness is essential]		\label{prop: hadamard}
  Let $A$ be an $n \times n$ Hadamard matrix, 
  and let $[n] = I_1 \cup \ldots \cup I_k$ be any partition. 
  Then, for any matrix $B$ that is constant on each block $I_i \times I_j$, we have
  $$
  \frac{1}{n} \norm{A-B}_F 
  \ge \frac{1}{2} \sqrt{1-\frac{k}{n}}.
  $$
\end{proposition}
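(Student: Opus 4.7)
The plan is to observe that $B$ automatically has rank at most $k$, and then to invoke the Eckart--Young--Mirsky low-rank approximation theorem together with the singular-value profile of a Hadamard matrix.

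First, given the partition $[n] = I_1 \cup \ldots \cup I_k$, I would introduce the $n \times k$ indicator matrix $P$ with $P_{a,i} = \ind_{a \in I_i}$. Any matrix $B$ constant on each block $I_i \times I_j$ can then be written as $B = P M P^\tran$ for some $k \times k$ matrix $M$, where $M_{ij}$ is the block constant on $I_i \times I_j$. In particular $\rank(B) \le \rank(M) \le k$.

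Next, the Eckart--Young--Mirsky theorem yields
$$\|A-B\|_F^2 \ge \sum_{i=k+1}^n \sigma_i(A)^2,$$
where $\sigma_1(A) \ge \ldots \ge \sigma_n(A)$ are the singular values of $A$. Since $A$ is Hadamard, $AA^\tran = nI_n$, so every $\sigma_i(A) = \sqrt{n}$. Therefore $\sum_{i=k+1}^n \sigma_i(A)^2 = n(n-k)$, and dividing by $n^2$ and taking square roots yields
$$\frac{1}{n}\|A-B\|_F \ge \sqrt{1 - \tfrac{k}{n}},$$
which is in fact twice the bound claimed in the proposition; the stated factor $\tfrac{1}{2}$ then follows a fortiori.

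There is no real obstacle in this approach: the only substantive step is recognizing the block-constant structure as a rank-$k$ factorization $PMP^\tran$, after which the conclusion is immediate from Eckart--Young and the defining identity $AA^\tran = nI$ of Hadamard matrices. If one wanted to avoid invoking Eckart--Young as a black box, an alternative is to expand $\|A-B\|_F^2 = \|A\|_F^2 - 2\langle A,B\rangle + \|B\|_F^2$ and bound $\langle A,B\rangle = \tr(P^\tran A P\, M)$ via Cauchy--Schwarz using $\|P^\tran A P\|_F^2 \le \tr(P^\tran A A^\tran P) = n\tr(P^\tran P)$, which gives the same estimate by an elementary computation.
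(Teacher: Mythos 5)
Your proof is correct and takes a genuinely different route from the paper's. The key observation that any matrix constant on each block $I_i\times I_j$ factors as $B=PMP^{\tran}$, hence has rank at most $k$, lets you invoke Eckart--Young--Mirsky directly; since a Hadamard matrix satisfies $AA^{\tran}=nI_n$ and so has all singular values equal to $\sqrt{n}$, the bound
$\norm{A-B}_F^2\ge\sum_{i>k}\sigma_i(A)^2=n(n-k)$
drops out immediately. This actually yields $\frac{1}{n}\norm{A-B}_F\ge\sqrt{1-k/n}$, a factor-$2$ improvement over the stated inequality. The paper instead reduces to matrices constant on the finer blocks $\{\ell\}\times I_j$, identifies the minimizer as column-averaging, and then applies Lemma~\ref{lem: orthogonal difference} to each column lying in a non-singleton part; this is more hands-on and loses a factor $2$ because the bound $1-1/\abs{I_j}\ge 1/2$ is applied per column rather than exploiting the full singular-value structure globally. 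Your spectral argument is shorter and sharper, and it also makes transparent that the obstruction is a rank constraint plus the flat spectrum of $A$, which is conceptually cleaner; the paper's argument, on the other hand, is entirely elementary and gives an explicit description of where the error concentrates (the non-singleton blocks). One small nit in your alternative sketch at the end: for the Cauchy--Schwarz route you should work with the column-orthonormalized indicator $Q=PD^{-1/2}$ (where $D=P^{\tran}P=\mathrm{diag}(\abs{I_1},\ldots,\abs{I_k})$) rather than $P$ itself, so that $\norm{Q^{\tran}AQ}_F^2\le\tr(Q^{\tran}AA^{\tran}Q)=n\tr(Q^{\tran}Q)=nk$; with $P$ unnormalized the trace identity you wrote does not give the right constant.
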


Thus, the error does not vanish unless the number of parts $k$ is extremely large, namely $k=n-o(n)$. 

The proof will use the following elementary bound: 

\begin{lemma}		\label{lem: orthogonal difference}
  Let $x_1,\ldots,x_m$ are orthonormal vectors in $\R^n$. 
  Then for each $k=1,\ldots,m$ we have
  $$
  \norm[2]{x_k - \frac{1}{m} \sum_{t=1}^m x_t}_2 \ge 1-\frac{1}{m}.
  $$
\end{lemma}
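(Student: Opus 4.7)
The plan is to carry out a direct expansion of the squared norm using orthonormality. Let $S := \frac{1}{m}\sum_{t=1}^m x_t$ denote the average. First I would compute the two relevant inner products: since the $x_t$ are orthonormal,
$$
\|S\|_2^2 = \frac{1}{m^2}\sum_{s,t=1}^m \langle x_s, x_t\rangle = \frac{1}{m^2}\cdot m = \frac{1}{m},
$$
and
$$
\langle x_k, S\rangle = \frac{1}{m}\sum_{t=1}^m \langle x_k, x_t\rangle = \frac{1}{m}.
$$

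Then I would expand the square and plug these in:
$$
\|x_k - S\|_2^2 = \|x_k\|_2^2 - 2\langle x_k, S\rangle + \|S\|_2^2 = 1 - \frac{2}{m} + \frac{1}{m} = 1 - \frac{1}{m}.
$$
This gives the exact value $\|x_k - S\|_2 = \sqrt{1 - 1/m}$, which is actually stronger than the stated bound.

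Finally I would pass from the square root to the linear bound using the elementary inequality $\sqrt{y}\ge y$ valid for all $y\in[0,1]$, applied to $y = 1 - 1/m \in [0,1]$. This yields
$$
\|x_k - S\|_2 = \sqrt{1 - 1/m} \ge 1 - \frac{1}{m},
$$
which is the claimed inequality. There is no real obstacle here — the argument is a one-line orthonormality calculation followed by the trivial concavity bound $\sqrt{y}\ge y$ on $[0,1]$. One could even state the sharper identity $\|x_k - S\|_2 = \sqrt{1-1/m}$ if needed in the sequel, but the weaker linear form is evidently the version that will be convenient when combined with a later union-bound or counting argument (e.g.\ to bound $\sum_k \|x_k - S\|_2^2$ from below in the Hadamard application).
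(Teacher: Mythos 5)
Your proof is correct, and it takes a genuinely different (if equally elementary) route from the paper's. The paper proves the bound in one line by projecting onto $x_k$: since $\|x_k\|_2 = 1$, Cauchy--Schwarz gives $\|x_k - S\|_2 \ge \langle x_k - S, x_k\rangle = 1 - 1/m$, with no need to compute $\|S\|_2$ at all. You instead compute the squared norm exactly, obtaining the stronger identity $\|x_k - S\|_2 = \sqrt{1 - 1/m}$, and then pass to the stated linear bound via $\sqrt{y} \ge y$ on $[0,1]$. Your version buys a sharp equality, which could in principle tighten the constant $1/2$ in Proposition~\ref{prop: hadamard} (one would get $\sqrt{1/2}$ in place of $1/2$ when $|I_j|\ge 2$), though the paper evidently did not need that. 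The paper's version buys brevity: it avoids computing $\|S\|_2^2$ and the extra concavity step. Both are complete and rigorous; neither has a gap.
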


\begin{proof}
The norm of the vector $x_k - \frac{1}{m} \sum_{t=1}^m x_t$ is bounded below by the inner product of that vector and $x_k$, which equals $1-1/m$.
\end{proof}

\begin{proof}[Proof of Proposition~\ref{prop: hadamard}]
The minimum of $\norm{A-B}_F$ over all matrices $B$ in the proposition can only get smaller if we minimize over all matrices that are constant on smaller blocks, namely on the sets $\{\ell\} \times I_j$, where $\ell=1,\ldots,n$ and $j=1,\ldots,k$. The latter minimum is attained\footnote{To see this, argue like in Step~2 of the proof of Theorem~\ref{thm: regularity}.}
for the matrix $B$ that is obtained by averaging the entries of $A$ in each block $\{\ell\} \times I_j$. Equivalently, $B$ 
is obtained by averaging the columns of $A$ in each set $I_j$. Formally, the columns of $B$ are
$$
B_\ell = \frac{1}{\abs{I_j}} \sum_{t \in I_j} A_t
\quad \text{for } \ell \in I_j,
$$ 
where $B_\ell$ is the $\ell$ column of $B$ and $A_t$ is the $t$th column of $A$. Note that there at least $n-k$ indices belong to the non-singleton blocks $I_j$, i.e.
$$
T \coloneqq \bigcup_{j:\; \abs{I_j}>1} I_j
\quad \text{satisfies}\quad 
\abs{T} \ge n-k.
$$
(Indeed, since $T^c$ is the union of the singleton blocks, its cardinality equals the number of such blocks $I_j$, which is bounded by the total number of blocks $k$.)

Pick any index $\ell \in T$, so $\ell \in I_j$ where $\abs{I_j} \ge 2$.
Since $A$ is an Hadamard matrix, its columns $A_t$ are orthogonal, and their Euclidean norms equal $\sqrt{n}$. Thus, applying Lemma~\ref{lem: orthogonal difference} and rescaling, we have
$$
\norm{A_\ell-B_\ell}_2 
\ge \Big( 1-\frac{1}{\abs{I_j}} \Big) \sqrt{n}
\ge \frac{\sqrt{n}}{2}.
$$
It follows that 
$$
\norm{A-B}_F^2 
= \sum_{\ell=1}^n \norm{A_\ell-B_\ell}_2^2
\ge \sum_{\ell \in T} \norm{A_\ell-B_\ell}_2^2
\ge \abs{T} \Big( \frac{\sqrt{n}}{2} \Big)^2
\ge \frac{n(n-k)}{4}.
$$
\end{proof}

\section{Application to differential privacy and synthetic data}

In this section we briefly describe how the findings in this paper can be applied to improve the results in~\cite{BSV2021a} on generating private synthetic data.
The partitioning result behind the covariance loss bound in Corollary~\ref{cor: covariance loss} and the Szemeredi regularity bound in 
Theorem~\ref{thm: wsr} provide a natural technique towards data privacy related to k-anonymity and differential privacy.
While the Szemeredi regularity lemma has been proposed in~\cite{foffano2019you,minello2020k} as a mechanism for data anonymization, the results in those papers are merely empirical and come without any theoretical guarantees.

The findings in this paper can be readily used to improve upon the data privacy and utility guarantees for k-anonymity and differential privacy in~\cite{BSV2021a}. 
For instance, using  Corollary~\ref{cor: covariance loss} in lieu of Theorem~1.2 in~\cite{BSV2021a}, but otherwise following the same procedure, we arrive at the following theorem which gives an improved accuracy guarantee compared to Theorem 5.14 in~\cite{BSV2021a}.
We leave the details to the reader and refer to~\cite{BSV2021a}  for the precise problem statement and a detailed description of the underlying concepts.

\begin{theorem}
 Let $K$ be a convex set in $\R^p$ that lies in the unit Euclidean ball $B_2^p$. Let $\alpha >0$. If $n,p \in \N$, $\eps > 0$ satisfy $n > C(\alpha)\frac{p}{\eps}$, then there is an $\eps$-differentially private algorithm with input $x_1,\ldots,x_n \in K$ and output $u_1,\ldots,u_{m(\alpha)} \in K$ such that
  $$
  \E \norm[3]{\frac{1}{n} \sum_{i=1}^n x_i^{\otimes d} - \frac{1}{m(\alpha)} \sum_{i=1}^m u_i^{\otimes d}}_2^2
  \lesssim 16^d \alpha,
  $$ 
  for all $d \in \N$. The  run time of the algorithm is $ C(\alpha)(np+q)$, where $q$ is the complexity to find a best approximation element of $K$ to a given vector in $\R^p$.
\end{theorem}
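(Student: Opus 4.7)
The strategy is to follow the three-stage pipeline of \cite[Theorem~5.14]{BSV2021a}, substituting Corollary~\ref{cor: covariance loss} for their Theorem~1.2; the improved covariance-loss constant then propagates directly into a better accuracy guarantee. The pipeline is: partition the data so that cluster means approximate the empirical moment tensors, privately release the cluster histogram via the Laplace mechanism, and replicate the cluster means according to the noisy counts.

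Concretely, I apply Corollary~\ref{cor: covariance loss} to the empirical measure $X\sim\Unif\{x_1,\ldots,x_n\}$ with $r$ chosen as a function of $\alpha$, producing a partition $[n] = I_1\cup\cdots\cup I_k$ with $k\le 2^r =: m(\alpha)$ and cluster means $y_j = \frac{1}{|I_j|}\sum_{i\in I_j}x_i \in K$ (by convexity, hence in $B_2^p$). Next, release noisy counts $\tilde n_j = |I_j| + \Lap(2/\eps)$: this is $\eps$-differentially private because the histogram has $\ell_1$-sensitivity $2$ under single-element changes, and post-processing preserves privacy. After truncating to nonnegative integers and renormalizing so $\sum_j \tilde n_j = m(\alpha)$, output the synthetic dataset $u_1,\ldots,u_{m(\alpha)}$ consisting of $\tilde n_j$ copies of $y_j$.

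For utility, split the error into a clustering term $\mathbb{E} X^{\otimes d} - \sum_j p_j y_j^{\otimes d}$ (with $p_j = |I_j|/n$) and a privacy term $\sum_j (p_j - \tilde p_j) y_j^{\otimes d}$. The privacy term is controlled by the second moment of the Laplace noise together with the hypothesis $n > C(\alpha)p/\eps$. The clustering term is the main content: writing $Y = \mathbb{E}[X|\FF]$ for the $\sigma$-algebra generated by the partition, I expand $\|\mathbb{E} X^{\otimes d} - \mathbb{E} Y^{\otimes d}\|_2^2 = \mathbb{E}\langle X,X'\rangle^d - 2\mathbb{E}\langle X,Y'\rangle^d + \mathbb{E}\langle Y,Y'\rangle^d$, and use $|a^d - b^d|\le d|a-b|$ on $[-1,1]$ (valid since all these inner products are in $[-1,1]$ by Cauchy--Schwarz) to reduce each difference to a scalar estimate on $|\langle X,X'\rangle - \langle Y,Y'\rangle|$, which is exactly what Theorem~\ref{thm: regularity} controls.

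The main obstacle is this last step: upgrading the $d=2$ covariance-loss bound into a bound on \emph{all} tensor moments with a single partition whose size $m(\alpha)$ does not depend on $d$. A degree-by-degree telescoping through the three inner-product powers $\langle X,X'\rangle^d, \langle X,Y'\rangle^d, \langle Y,Y'\rangle^d$ accumulates at most a geometric factor per degree, producing the stated $16^d\alpha$ bound and mirroring the argument in \cite[\S5]{BSV2021a}; the constant $16$ arises because each of the three cross-terms is rewritten using the pointwise bound $|\langle \cdot,\cdot\rangle|\le 1$ and the inequality $|a^d-b^d|\le d|a-b|$, and the resulting geometric series is squared when passing from $L^1$ to $L^2$ in the norm.
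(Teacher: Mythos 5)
Your high-level strategy---substitute Corollary~\ref{cor: covariance loss} for Theorem~1.2 in~\cite{BSV2021a} and re-run their pipeline (partition, noisy histogram, replication)---is exactly what the paper states. But the paper gives only a one-sentence pointer to~\cite{BSV2021a} and defers all details, so the real question is whether the details you chose to fill in are correct. I see two concrete problems.

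\emph{The $16^d$ factor.} You propose to derive it from the scalar inequality $|a^d-b^d|\le d|a-b|$ on $[-1,1]$ plus a ``geometric series which is squared when passing from $L^1$ to $L^2$.'' That inequality yields a factor \emph{linear} in $d$, not geometric, and the $L^1\to L^2$ squaring comment does not correspond to a coherent computation. Moreover, your cross-term $\langle X,Y'\rangle-\langle Y,Y'\rangle=\langle X-Y,Y'\rangle$ has $L^2$-norm controlled by $\sqrt{\ip{\E XX^\tran-\E YY^\tran}{\E YY^\tran}}$, which scales as the \emph{square root} of the covariance loss, not linearly; this changes the $r\leftrightarrow\alpha$ calibration and hence $m(\alpha)$. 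The $16^d$ in the statement is an artifact of the degree-$d$ moment-matching argument already present in~\cite{BSV2021a}~\S 5 and is unaffected by the improved covariance-loss constant---it is not something you should be re-deriving from Corollary~\ref{cor: covariance loss}.

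\emph{Privacy.} As written, your mechanism outputs $\tilde n_j$ copies of the cluster means $y_j=|I_j|^{-1}\sum_{i\in I_j}x_i$. These means are data-dependent with sensitivity $\Theta(1/|I_j|)$, and the partition $\{I_j\}$ produced by Theorem~\ref{thm: regularity} is itself data-dependent (it is chosen via Fubini to minimize the empirical objective). Adding Laplace noise only to the counts does not privatize either of these releases, so the algorithm as you describe it is not $\eps$-differentially private. The construction in~\cite{BSV2021a} must address this (e.g.\ by making the partition data-independent and/or perturbing the representatives before projection to $K$); this is the part of their pipeline that genuinely needs to be imported verbatim rather than re-sketched.

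In short: right strategy, same as the paper's (which is itself only a pointer), but the two details you chose to expand---where $16^d$ comes from and how the mechanism achieves privacy---are the two places where the sketch breaks.
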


\section*{Acknowledgement}

M.B. acknowledges support from NSF DMS-2140592. T.S. acknowledges support from NIH R01HL16351, NSF DMS-2027248, and NSF DMS-2208356.
 R.V. acknowledges support from NSF DMS-1954233, NSF DMS-2027299, U.S. Army 76649-CS, and NSF+Simons Research Collaborations on the Mathematical and Scientific Foundations of Deep Learning.


\end{document}